\documentclass[journal ]{new-aiaa}
\usepackage[utf8]{inputenc}
\usepackage{textcomp}

\usepackage{graphicx}
\usepackage{amsmath}
\usepackage[version=4]{mhchem}
\usepackage{siunitx}
\usepackage{longtable,tabularx}
\setlength\LTleft{0pt}

\usepackage{graphicx}
\usepackage{xcolor,comment}
\usepackage{float}
\usepackage{color}
\usepackage{graphicx}          
\usepackage[ruled,vlined]{algorithm2e}

\newcommand\bo[1]{\boldsymbol{#1}}

\newtheorem{theorem}{Theorem}

\newtheorem{remm}{Remark}

\newenvironment{proof}{\noindent {\em Proof.}}{\hfill \hspace*{1pt}\hfill $\square$\\}

\usepackage{algorithmicx}
\usepackage{algorithmicx}
\usepackage{optidef}

\title{Command Governors with Inexact Optimization and without Invariance}

\author{Emanuele Garone \footnote{Associate Professor, Service d'Automatique et Analyse des Systemes, AIAA Senior Member.}}
\affil{Universite Libre de Bruxelles, Brussels, Belgium}
\author{Ilya V. Kolmanovsky \footnote{Professor, Aerospace Engineering, AIAA Associate Fellow.}}
\affil{University of Michigan, Ann Arbour, MI}

\begin{document}

\maketitle



\section{Introduction}

\lettrine{R}{eference} Governors (RGs) \cite{garone2017reference} are add-on schemes to nominal closed-loop systems used
to enforce pointwise-in-time state and control constraints. An RG acts as a safety supervisor for the reference commands (set-points) given to the closed-loop system by a human 
operator or by a higher-level planner in autonomous vehicles.  The RG  monitors  reference commands to which the controller responds and modifies them if they create a danger of constraint violation to preserve safety. 

RGs are an attractive option for practitioners as it enables them to use a variety of techniques for control system design that do not explicitly handle constraints while relying on RG for constraint enforcement.  
Reference \cite{garone2017reference}
surveys the literature on aerospace and other proposed applications of RGs, the RG theory
and compares RG with alternative approaches to constraint handling such as Model Predictive Control \cite{borrelli2017predictive}; we do not replicate this survey here due to limited space.

A Command Governor (CG), first proposed in \cite{bemporad1997nonlinear} for linear discrete-time systems,
is a particular type of RG that modifies the reference command by finding a minimum norm projection of the original reference command 
onto a safe set of commands for the given state, i.e., a cross section of the safe set of state-command pairs.   This safe set is constructed as an invariant subset of the maximum output admissible set (MOAS) \cite{gilbert1991linear}, i.e., the set of all initial states and constant commands that yield response satisfying the imposed constraints.  When this safe set is polyhedral (this is the case, e.g., if CG is  designed based on a linear discrete-time model and state and control constraints are linear), the minimum norm projection is computed by solving a quadratic programming (QP) problem at each discrete-time instant.  Even though this QP problem is low dimensional, it typically has a large number of constraints due to the typically large number of affine inequalities needed to define the safe set.  Consequently, solving this QP problem online is challenging. Extensions of CG to nonlinear systems have been presented in \cite{bemporad1998reference}. 

In this Note we present a simple but very meaningful modification of the CG which enables it to operate with non-invariant safe sets and ensures feasibility and convergence properties even in the case the optimization is inexact. This modification significantly extends the applicability of the CG to practical problems where the construction of accurate invariant approximations of the MOAS and/or exact optimization may not be feasible due to model complexity or limited available onboard computing power.  We will illustrate the impact of this modification on an F-16 aircraft longitudinal flight control example in terms of computational time and memory reduction.


The paper is organized as follows. 
In Section~\ref{sec:1.5} we formally introduce the CG and further explain the significance of our contribution. 
In Section~\ref{sec:2} we highlight
the mechanism by which convergence of the modified by CG reference command to the original reference command is achieved in the existing CG theory.This informs the modification to CG presented in Section~\ref{sec:3} for which
we prove similar convergence results.  A simulation example of
longitudinal control of F-16 aircraft is reported in Section~\ref{sec:4}.  
Finally, concluding remarks are made in Section~\ref{sec:5}.

\section{Preliminaries}\label{sec:1.5}


A CG is an add-on algorithm to a  nominal closed-loop system (Plant + Controller) represented by a discrete-time model (system of difference equations),
\begin{equation}\label{equ:dynamics}
\bo{x}(t+1)=f(\bo{x}(t),\bo{v}(t)),
\end{equation}
where $\bo{x}(t) \in \mathbb{R}^n$ is the state vector aggregating the states of both the Plant and Controller, $\bo{v}(t) \in \mathbb{R}^m$ is the set-point command / reference vector, 
$m,n \in \mathbb{Z}_{>0}$ are positive integers, and $t \in \mathbb{Z}_{ \geq 0} $ is a non-negative integer which designates the discrete time instant.  

The CG enforces pointwise-in-time constraints expressed as 
\begin{equation}\label{equ:cnr}
(\bo{x}(t),\bo{v}(t)) \in \mathcal{C}~~\mbox{for all $t \in \mathbb{Z}_{\geq 0}$},
\end{equation}
where $\mathcal{C} \subset \mathbb{R}^{n+m}$ is a specified constraint set.  
This is done by monitoring and modifying the original reference command (set-point) $\bo{r}(t) \in \mathbb{R}^m$ to a safe reference command $\bo{v}(t) \in \mathbb{R}^m$,  see Figure~\ref{fig:basicRG}.
Note that as Eq. (\ref{equ:dynamics}) is a closed-loop system model,
Eq. (\ref{equ:cnr}) can represent both state and control constraints for the Plant \cite{garone2017reference}.

\begin{figure}[!ht]
	\begin{center}
		\includegraphics[width=12cm]{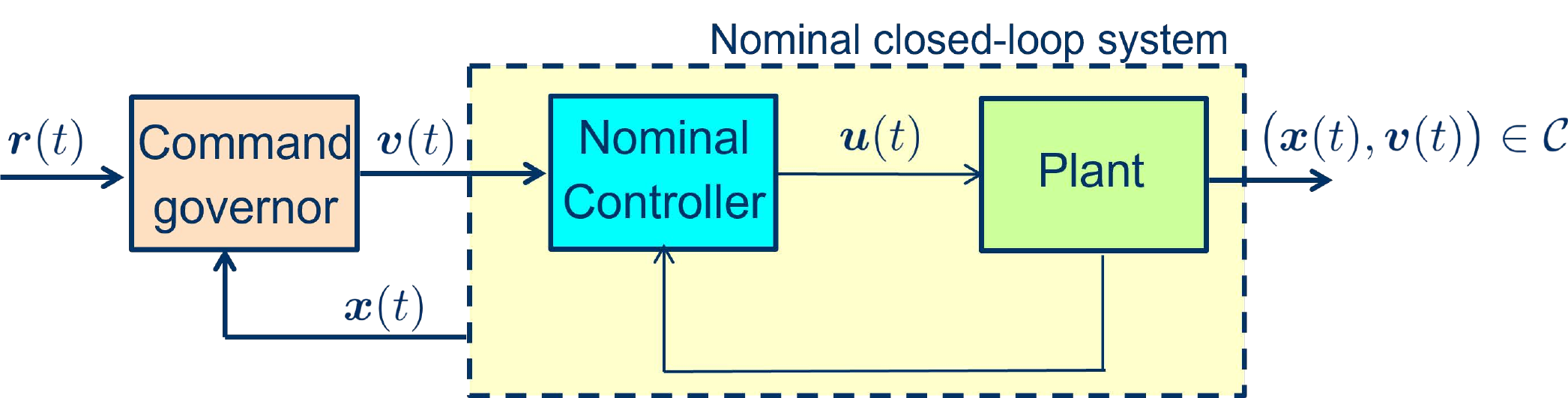}
		\caption{Command governor augmenting a nominal closed-loop system consisting of a Plant and a nominal Controller.}
		\label{fig:basicRG}
	\end{center}
\end{figure}

The MOAS, typically denoted as $O_\infty,$ is the set of all initial states and constant reference commands for which the subsequent response satisfies the constraints for all future times \cite{gilbert1991linear},
$$O_\infty=\{(\bo{x}(0),\bo{v}):~ \big(\bo{x}(t;\bo{x}(0),\bo{v} \big) \in \mathcal{C}, \, \forall t \in \mathbb{Z}_{\geq 0} \},$$
where $\bo{x}(t;\bo{x}(0),\bo{v})=A^t \bo{x}(0) + \sum_{j=0}^{t-1}A^j B v$ denotes the state trajectory of the system represented by Eq.~(\ref{equ:dynamics}) resulting from the initial state $\bo{x}(0)$ and the application of a constant reference command,
$\bo{v}(t)=\bo{v} $ for all $ t \in \mathbb{Z}_{\geq 0}$.
Any subset $P$ of $O_\infty$ is constraint admissible (satisfies constraints); such a subset is referred to as
``invariant for a fixed $\bo{v}$'' (or simply as ``invariant'')
if $(\bo{x},\bo{v}) \in P$ implies $(f(\bo{x},\bo{v}),\bo{v}) \in P$.

The Scalar Reference Governor (SRG) is the simplest RG algorithm, which searches for the closest admissible reference along the line segment connecting $\bo{v}(t-1)$ and $\bo{r}(t)$ by solving at each time instant the optimization problem:

\begin{maxi}|l|
 	{}{\kappa~~~~~~~~~~~~~~~~~~~~~~~~~~~~~~~~~~~~~~~~~~~~~~~~}{}{} \label{srg1}
	\addConstraint{0 \leq \kappa \leq 1}{}{}
	\addConstraint{\big( \bo{x}(t), \bo{v}(t-1)+\kappa ( \bo{r}(t)-\bo{v}(t-1) ) \big) \in P}{}{}
\end{maxi}
At each time instant $t$,  $\bo{v}(t)$ is assigned as the optimal $\bo{v}^*,$ i.e. $\bo{v}(t)=\bo{v}^*=\bo{v}(t-1)+\kappa^* (\bo{r}(t)-\bo{v}(t-1)),$ where $\kappa^*$ is the optimal solution to the optimization problem (\ref{srg1}).

The first SRG proposed in the literature for discrete-time systems considered  
Eq. (\ref{equ:dynamics}) as a linear system and $P$ in the optimization problem (\ref{srg1}) was chosen equal to $\tilde{O}_\infty$ -- a finitely-determined, invariant, constraint-admissible, inner approximation of $O_\infty$.
In later versions of the SRG, the requirement for $P$ in the optimization problem (\ref{srg1}) to be 
$\tilde{O}_\infty$ or even
invariant  was removed \cite{gilbert1999fast}, allowing a much greater freedom in selecting $P \subseteq \tilde{O}_\infty$.  Note that in this case, the optimization problem~(\ref{srg1}) is not guaranteed to be recursively feasible. However, under reasonable assumptions, it is possible to prove that if no feasible solution to (\ref{srg1}) exists and the reference is held constant, $\bo{v}(t)=\bo{v}(t-1),$ then the optimization problem \eqref{srg1} will be feasible again after a finite number of steps.
This property is particularly useful when,
to reduce the computational time and memory,
sets $P$ that are simple subsets of $\tilde{O}_\infty$ are used. Systematic procedures to generate
simpler $P$ from $\tilde{O}_\infty$ by removing almost redundant inequalities from its description and applying a pull-in transformation have been proposed  \cite{gilbert1999fast}.

One of the main strengths of SRG is its capability to manage constraints while being very computationally efficient. This is mainly due to the fact that in the SRG the selection of $\bo{v}(t)$ is reduced to the selection of the single scalar variable $\kappa \in [0,\, 1]$ which can be performed very efficiently (often in  closed form) for many types of constraints, see e.g.  \cite{nicotra2016fast}. 
Unfortunately, in the case $m>1$, this comes at the price of potentially slowing down the system response as certain   
$v_i(t)$ may be able to converge to the corresponding $r_i(t)$, $i=1,\cdots,m,$ faster than others and thus $\bo{v}(t)$ could be made closer to $\bo{r}(t)$ if not constrained to the line segment between $\bo{v}(t-1)$ and $\bo{r}(t).$


The above limitation can be overcome making use of the CG instead of SRG. Unlike SRG, the CG has more flexibility in choosing the reference $v(t)$  as
the solution to the following optimization problem:
\begin{argmini}|l|
	{\bo{v}}{\|\bo{r}(t)-\bo{v}\|^2_Q}{}{\bo{v^*}=} 
	\addConstraint{(\bo{x}(t),\bo{v}) \in P}{}{} \label{cg1}
\end{argmini}
where $Q=Q^{\rm T} \succ 0$,  $
\|\bo{r}(t)-\bo{v}\|^2_Q=(\bo{r}(t)-\bo{v})^{\rm T}Q (\bo{r}(t)-\bo{v}),$ 
and $P \subseteq O_\infty$ is invariant. In this case, at each time instant $t,$ the applied command is $\bo{v}(t)=\bo{v^*}.$

The price to be paid for using a CG instead of an SRG is that the optimization problem to be solved is not anymore a simple single variable optimization problem as in the SRG case, and, typically, 
it must be solved making use of an iterative 
optimization algorithm. For this reason, in practice, the CG is used almost exclusively when  the set $P \subseteq O_\infty$ is convex in $v$ for any fixed $x.$ Furthermore, to ensure the correct behaviour of the CG scheme (e.g., recursive feasibility of the optimization problem (\ref{cg1})), the set $P$ must be invariant.

The primary objective of this paper is to propose a modification of the conventional CG
 (\ref{cg1}) which:
\begin{enumerate} 
	\item Allows the CG to use a non-invariant set $P \subseteq O_\infty$;
	\item Allows to solve the optimization problem (\ref{cg1}) inexactly while still ensuring constraints satisfaction and finite-time convergence.
\end{enumerate}
This modification significantly extends the applicability of the CG to practical problems where finding invariant sets may be problematic and where exact optimization may not be feasible due to unreliability of the optimizers or limited computing power.  


\section{CG Convergence}\label{sec:2}

The Conventional CG convergence theory makes use of the following assumptions:

\begin{itemize}
	\item [A1] $P$ is positively invariant for any fixed $\bo{v}$, i.e., $(\bo{x},\bo{v}) \in P$ implies $(f(\bo{x},\bo{v}),\bo{v}) \in P$;
	\item [A2] For each $\bo{v}$ there exists a unique equilibrium $\bo{x_v}$ associated to a constant reference $\bo{v}$ such that $f(\bo{x_v},\bo{v})=\bo{x_v}$ and 
	$\bo{x_v}$ is Lipschitz continuous with respect to $\bo{v}$.  It is further assumed that the sets $R_P=\{\bo{v}|(\bo{x_v},\bo{v}) \in P\}$ and $P_x=\{\bo{v}|(\bo{x},\bo{v}) \in P\}$ are closed  and convex 
	for all $\bo{x}$; 
	\item [A3] There exists a scalar $\varepsilon>0$ such that for any fixed $\bo{v} \in R_P$ the set $P_v=\{\bo{x} | (\bo{x},\bo{v})\in P \}$ contains a ball of radius $\varepsilon$ centered at $\bo{x_v}$;
{	\item [A4] 
              $P \subseteq O_\infty \subseteq \mathcal{C}$};
	\item[A5] If $\bo{v}(t)-\bo{v}(t-1) \to 0$ as $t \to \infty$ then the solutions of 
	Eq. (\ref{equ:dynamics}) satisfy 
	$\bo{x}(t) \to \bo{x}_{\bo{v}(t)}$ as $t \to \infty$.
\end{itemize}

Assumptions A1-A4 are reasonable and are typically
made in the study of reference and command governors.
The assumption A5 is also reasonable.  For instance, if the discrete-time dynamics are linear,
$\bo{x}(t+1)=A \bo{x}(t)+B \bo{v}(t)$, and $A$ is Schur (all eigenvalues are inside the unit disk of complex plane), then
$\bar{x}_{\bo{v}}=(I-A)^{-1}B\bo{v}$, 
$$\bo{x}(t+1)-\bo{x}_{\bo{v}(t+1)}
=A(\bo{x}(t)-\bo{x}_{\bo{v}(t)})+(I-A)^{-1}B (\bo{v}(t)-\bo{v}(t+1)),$$ 
and A5 holds. 
For general nonlinear systems this property is similar  to the  discrete-time incremental Input-to-State (ISS) property \cite{tran2016incremental}. 

Under these assumptions it is possible to prove the following properties:
\begin{theorem}\label{TheoremCG1}
	Let the applied reference $\bo{v}(t)$ be managed by a CG based on solving at each sampling time the optimization problem in Eq.~\eqref{cg1}
	and let A1-A5 hold. If a $\bo{v}(0)$ exists such that $(\bo{x}(0),\bo{v}(0)) \in P$ then: 
	\item[1.] Constraints are satisfied at all time instants, $t \geq 0$;
	\item[2.] If the desired reference is constant, $\bo{r}(t)=\bo{r_0}$ for $t \geq \hat{t}$, 
	and, moreover, $\bo{r_0} \in R_P,$ then the sequence of $\bo{v}(t)$ converges in finite time to $\bo{r_0}$;  
	\item[3.] If the desired reference is constant, $\bo{r}(t)=\bo{r_0}$ for $t \geq \hat{t}$,  and  
	$\bo{r_0} \notin R_P,$ then the sequence of $\bo{v}(t)$ converges in finite time to the best approximation of $\bo{r_0}$ in $R_P,$, i.e., to $\bo{r^*} = arg \min_{\bo{v}\in R_P} ||\bo{v}-\bo{r}||_Q^2$ \end{theorem}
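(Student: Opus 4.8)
The plan is to establish the three claims in order, handling Part~1 by recursive feasibility and treating Parts~2 and~3 together through a monotone-cost argument that culminates in a single finite-time projection mechanism. For Part~1, I would prove by induction that $(\bo{x}(t),\bo{v}(t)) \in P$ for all $t \geq 0$. The base case is the hypothesis that a feasible $\bo{v}(0)$ exists. For the inductive step, assume $(\bo{x}(t),\bo{v}(t)) \in P$; then A1 gives $(\bo{x}(t+1),\bo{v}(t)) = (f(\bo{x}(t),\bo{v}(t)),\bo{v}(t)) \in P$, so $\bo{v}(t)$ is a feasible point of problem~\eqref{cg1} at time $t+1$, the optimizer $\bo{v}(t+1)$ exists, and $(\bo{x}(t+1),\bo{v}(t+1)) \in P$. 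Invoking A4, $P \subseteq \mathcal{C}$, then yields constraint satisfaction for all $t$.

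For the convergence claims, fix $t \geq \hat{t}$ so that $\bo{r}(t)=\bo{r_0}$, and set $V(t)=\|\bo{r_0}-\bo{v}(t)\|_Q^2$. The same recursive-feasibility observation shows $\bo{v}(t-1) \in P_{\bo{x}(t)}$, so optimality gives $V(t) \le V(t-1)$; being nonincreasing and bounded below, $V(t)$ converges to some $V_\infty$. Since $P_{\bo{x}(t)}$ is closed and convex (A2) and $Q \succ 0$, $\bo{v}(t)$ is the unique $Q$-projection of $\bo{r_0}$ onto $P_{\bo{x}(t)}$, and the Pythagorean inequality for projections onto convex sets gives
\[
\|\bo{v}(t)-\bo{v}(t-1)\|_Q^2 \le \|\bo{r_0}-\bo{v}(t-1)\|_Q^2 - \|\bo{r_0}-\bo{v}(t)\|_Q^2 = V(t-1)-V(t).
\]
As the right-hand side tends to zero, $\bo{v}(t)-\bo{v}(t-1) \to 0$, which by A5 forces $\bo{x}(t)-\bo{x}_{\bo{v}(t)} \to 0$.

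Next I would identify the limit. Let $\bo{r^*}$ be the $Q$-projection of $\bo{r_0}$ onto the closed convex set $R_P$ (A2); since $\bo{r_0} \in R_P$ yields $\bo{r^*}=\bo{r_0}$, Part~2 is the special case of Part~3. Taking any convergent subsequence $\bo{v}(t_k) \to \bar{\bo{v}}$, Lipschitz continuity of equilibria (A2) together with the previous step gives $\bo{x}(t_k) \to \bo{x}_{\bar{\bo{v}}}$, so closedness of $P$ yields $(\bo{x}_{\bar{\bo{v}}},\bar{\bo{v}}) \in P$, i.e. $\bar{\bo{v}} \in R_P$. To show $\bar{\bo{v}}=\bo{r^*}$, suppose not and move along the segment $\bo{v}_\lambda=(1-\lambda)\bar{\bo{v}}+\lambda\bo{r^*} \in R_P$; convexity of $\|\bo{r_0}-\cdot\|_Q^2$ makes $\|\bo{r_0}-\bo{v}_\lambda\|_Q^2 < V_\infty$ for small $\lambda>0$. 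By A3 the ball $\ball(\bo{x}_{\bo{v}_\lambda},\varepsilon)$ lies in $P_{\bo{v}_\lambda}$, and choosing $\lambda$ small (controlling $\|\bo{x}_{\bar{\bo{v}}}-\bo{x}_{\bo{v}_\lambda}\|$ via the Lipschitz bound) and $k$ large places $\bo{x}(t_k)$ inside this ball; then $\bo{v}_\lambda$ is feasible at $t_k$, so $V(t_k) \le \|\bo{r_0}-\bo{v}_\lambda\|_Q^2 < V_\infty$, contradicting $V(t_k) \ge V_\infty$. Hence every limit point equals $\bo{r^*}$ and $\bo{v}(t) \to \bo{r^*}$; the same construction shows $V(t) \ge \|\bo{r_0}-\bo{r^*}\|_Q^2$ for all $t$.

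Finally I would upgrade to finite-time convergence using the strict $\varepsilon$-room in A3. Since $\bo{v}(t) \to \bo{r^*}$ and $\bo{x}(t)-\bo{x}_{\bo{v}(t)} \to 0$, Lipschitz continuity gives $\bo{x}(t) \to \bo{x}_{\bo{r^*}}$, so beyond some finite time $\bo{x}(t) \in \ball(\bo{x}_{\bo{r^*}},\varepsilon) \subseteq P_{\bo{r^*}}$, i.e. $\bo{r^*} \in P_{\bo{x}(t)}$ is feasible. Then $V(t) \le \|\bo{r_0}-\bo{r^*}\|_Q^2$, which combined with the reverse inequality forces $V(t)=\|\bo{r_0}-\bo{r^*}\|_Q^2$; uniqueness of the minimizer then gives $\bo{v}(t)=\bo{r^*}$ exactly, and $\bo{v}(t)=\bo{r_0}$ in the setting of Part~2. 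The main obstacle is precisely this last upgrade: asymptotic convergence follows from the monotone cost and A5, but finite-time exactness hinges on A3, whose strictly positive radius makes the target command feasible, and hence selected, after finitely many steps rather than only in the limit. A secondary point requiring care is justifying $\bar{\bo{v}} \in R_P$, which uses closedness of $P$ in addition to A2.
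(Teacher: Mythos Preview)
Your proposal is correct and follows essentially the same route as the paper: recursive feasibility via A1/A4, monotonicity of $V(t)=\|\bo{r_0}-\bo{v}(t)\|_Q^2$, the projection (Pythagorean) inequality $V(t-1)\geq V(t)+\|\bo{v}(t)-\bo{v}(t-1)\|_Q^2$ to force $\bo{v}(t)-\bo{v}(t-1)\to 0$, then A5 and the $\varepsilon$-ball of A3 to identify the limit and upgrade to finite time. The paper derives the key inequality via the first-order optimality condition while you invoke it directly as a projection identity, and your subsequence argument and finite-time step are spelled out more carefully than the paper's sketch; your flag that closedness of $P$ (not just of the slices $P_{\bo{x}}$) is implicitly needed to place subsequential limits in $R_P$ is a fair technical observation that the paper glosses over.
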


Here we review several elements of the proof as they inform subsequent modifications to the CG:

\begin{proof}

	1) Because of assumption A1, $P$ is invariant for any fixed $\bo{v}$. Then $(\bo{x}(t),\bo{v}(t-1))$ is always a feasible solution for the optimization problem \eqref{cg1}. Constraints satisfaction at all time instants can be concluded from assumption A4. 

	2),3) The starting point is to note that, because of assumption A1 and since $\bo{v}(t-1)$ is feasible at time $t$, the function $V(t)=||\bo{v}(t)-\bo{r_0}||_Q^2$ for $t>\hat{t}$ is non-increasing and bounded from above and below. Consequently, $\lim_{t \rightarrow \infty} V(t)$ exists and is finite, which also implies that $$\lim_{t \rightarrow \infty} \big[ V(t)-V(t-1) \big]=0.$$ 
	Note that in the general case this 	does not imply that 
	$\lim_{t \rightarrow \infty} \bo{v}(t)$ exists.
	 However, because of the convexity and closeness of $P_{\bo{x}(t)}$ (assumption A2) it is possible to prove that 
	\begin{equation}\label{equ:key1}
	   V(t-1)\geq V(t)+||\bo{v}(t)-\bo{v}(t-1)||_Q^2,
	\end{equation} 
	which implies that $\lim_{t \rightarrow \infty} 
	\big[\bo{v}(t)-\bo{v}(t-1) \big]=0$,
	 and hence $\bo{x}(t) \to \bo{x}_{\bo{v}(t)}$ as $ t \to \infty$ (assumption A5).  
	 
	 The rest of the proof is completed using assumption A3, that ensures feasibility, i.e., that $(\bo{x}(t),\bo{v}) \in P$, of any $\bo{v}$ such that $\|\bo{v} - \bo{v}(t-1)\| < \delta$ where $\delta>0$ is sufficiently small. Hence the only possible value for $\lim_{t \rightarrow \infty} \bo{v}(t)$ is $\bo{r_0}$ in the case $\bo{r_0} \in R_P$ and 
	 $\lim_{t \rightarrow \infty} \bo{v}(t) = \bo{r^*}$ otherwise. Furthermore, these limits are reached in finite time.
	 
A key argument of the entire proof is inequality (\ref{equ:key1}). To prove this inequality, the first step is to note that 
		since $P$ is invariant, $\bo{v}(t-1)$ is a feasible solution to optimization problem~\eqref{cg1} while $\bo{v}(t)$ is the optimal solution.  Note that $P_{\bo{x}(t)}$ is closed and convex, $\bo{v^-}=\bo{v}(t-1) \in P_{\bo{x}(t)}$ and $\bo{v^*}=\bo{v}(t)$ is the minimizer of  $F(\bo{v})=\|\bo{r_0}-\bo{v}\|_Q^2$ over $ \bo{v}
		\in P_{\bo{x}(t)}$. Then the necessary condition for optimality of $\bo{v^*}$ implies that, $d_+F(\bo{v^*};\bo{v} - \bo{v^*}) =(\nabla F(\bo{v}))^{\rm T} (\bo{v}-\bo{v^*}) \geq 0$ for any $\bo{v} \in P_{\bo{x}(t)}$ where $d_+$ stands for the Gateaux differential (directional derivative). Thus
		$$d_+f(\bo{v^*};\bo{v^-} -  \bo{v^*})=-2 (\bo{r_0}-\bo{v^*})^{\rm T} Q (\bo{v^-} - \bo{v^*})  \geq 0,$$
		and hence,
		\begin{equation}\label{equ:gateaux} (\bo{r_0} - \bo{v(t)})^{\rm T} Q (\bo{v^-} - \bo{v}(t)) \leq 0.
		\end{equation}
		Transforming inequality (\ref{equ:gateaux}) as
		$$(\bo{r_0}-\bo{v}(t)-\bo{v^-} + \bo{v^-})^{\rm T} Q(\bo{v^-} - \bo{r_0} + \bo{r_0} - \bo{v}(t) ) \leq 0,$$
		expanding and applying inequality (\ref{equ:gateaux}) again, it follows that
        \begin{equation}\label{equ:key2}
        ||\bo{v^-} - \bo{r_0}||_Q^2 \geq ||\bo{v}(t) - \bo{r_0}||_Q^2 + ||\bo{v^-} - \bo{v}(t)||_Q^2,
        \end{equation}	
        which implies the inequality (\ref{equ:key1}). 
\end{proof}


{\bf Remark~1: } It is worth to remark that unlike the SRG, which requires at time zero the knowledge of a feasible $\bo{v}(0)$ to start the algorithm, the CG only requires that a feasible $\bo{v}(0)$ exists as the CG itself will be able compute it. This  not only simplifies the start-up relative to the SRG, but also means that the CG has some implicit reconfiguration capability in case of impulsive disturbances. Indeed, if an impulsive disturbance changes the state of the system in such a way that the previously applied reference $\bo{v}(t-1)$ is not feasible anymore (i.e. $(\bo{x}(t),\bo{v}(t-1))\notin P$), the CG is able (whenever possible) to reconfigure the reference $\bo{v}(t)$ in such a way that $(\bo{x}(t),\bo{v}(t))\in P$. For this reason, the CG has also been used in Fault Tolerant control schemes \cite{casavola2007fault}. However it must be mentioned that, depending on the application, erratic jumps in $\bo{v}(t)$ due to occasional infeasibility caused by model mismatch or impulsive disturbances may not be necessarily preferable to maintaining 
previously applied reference, provided it is not permanently stuck, so this property is not necessarily an advantage.

\section{Modified Command Governor}\label{sec:3}

The proof of Theorem~\ref{TheoremCG1} reveals that the convergence results follow from the condition (\ref{equ:key2}) which, in the conventional CG case, is ensured thanks to the invariance of $P$  and the assumption that the CG is able to compute the optimal solution of the optimization problem \eqref{cg1}.  
The key observation behind this note is that if the condition (\ref{equ:key2}) satisfied in some other way, the results of Theorem~\ref{TheoremCG1} still follow without assuming invariance of $P$ or relying on exact optimization.  

A simple way to ensure that the condition (\ref{equ:key2}) holds is to use the following  logic-based condition for accepting a sub-optimal solution of the optimization problem~\eqref{cg1}. 

{\textbf{Modified CG}} Let $\bo{v^\prime}$ be a possibly sub-optimal solution
of the optimization problem  \eqref{cg1}. $\bo{v^\prime}$ is \textit{accepted}, i.e. 
$\bo{v}(t)=\bo{v^\prime}$ if :
\begin{itemize}
    \item $\bo{v^\prime}$ is feasible,  i.e. $(\bo{x}(t),\bo{v^\prime}) \in P$
    \item $\bo{v^\prime}$ satisfies \begin{equation}\label{equ:cond2}
||\bo{v^\prime}-\bo{r}(t)||_Q^2 \leq  ||\bo{v}(t-1)-\bo{r}(t)||_Q^2-||\bo{v^\prime}-\bo{v}(t-1)||_Q^2.
\end{equation}
\end{itemize}
Otherwise, $\bo{v^\prime}$ is \textit{rejected} and the previous value of the reference is held, i.e. $\bo{v}(t)=\bo{v}(t-1)$.

The following theorem shows that under very mild conditions on the solver properties, this simple logic allows to retain all of the properties of the conventional CG even if the solution of the optimization problem is inexact and if $P$ is not invariant.

\begin{theorem}\label{TheoremCG2}
	Let the set $P$ satisfy assumptions A2-A4, and let assumption A5 also hold. 	Consider a system 
	where $\bo{v}(t)$ is managed accordingly to the 
	\textbf{modified CG}. Under the only condition that there exist two scalars $\varepsilon^\prime,\delta^\prime>0$ such that whenever  
	\begin{equation}\label{equ:9pre}
	\|\bo{x}(t)-\bo{x}_{\bo{v}(t-1)}\|<\varepsilon^\prime,
	\end{equation}
	then the sub-optimal solution of the optimization problem \eqref{cg1} provides a feasible solution $\bo{v}(t)$ such that
	\begin{equation}\label{inexactCond}
	||\bo{v}(t)-\bo{r^*}(t)||_Q^2 \leq \max\left\{0, ||\bo{v}(t-1)-\bo{r^*}(t)||_Q^2-{ (\delta^\prime)^2} \right\} , 
	\end{equation}
	where $\bo{r^*}(t)=\arg \min_{\bo{v}\in R_P} ||\bo{v}-\bo{r}(t)||_Q^2,$
	then if $\bo{v}(0)$ is such that $(\bo{x}(0),\bo{v}(0)) \in P$:
	\begin{itemize}
		\item[1.] Constraints are satisfied at all time instants $t \geq 0$;
		\item[2.] If the desired reference is constant, $\bo{r}(t)=\bo{r_0}$ for $t \geq \hat{t}$, 
		and, moreover, $\bo{r_0} \in R_P,$ then the sequence $\bo{v}(t)$ converges in finite time to $\bo{r_0}$; 
		\item[3.] If the desired reference is constant, $\bo{r}(t)=\bo{r_0}$ for $t \geq \hat{t}$,  and  
		$\bo{r_0} \notin R_P,$ then the sequence $\bo{v}(t)$ converges in finite time to the best approximation of $\bo{r_0}$ in $R_P,$ i.e. $\bo{r^*}=\arg \min_{\bo{v}\in R_P} ||\bo{v}-\bo{r_0}||_Q^2$. 
	\end{itemize}
\end{theorem}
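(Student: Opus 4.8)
The plan is to mirror the structure of the proof of Theorem~\ref{TheoremCG1}, but replace each appeal to invariance and exact optimization with the acceptance logic of the \textbf{modified CG} and the solver hypothesis (\ref{inexactCond}). For part~1, I would argue that whenever a candidate $\bo{v}'$ is rejected we set $\bo{v}(t)=\bo{v}(t-1)$, and whenever it is accepted it is feasible by the first bullet of the acceptance test. Hence, provided the previous iterate $\bo{v}(t-1)$ was feasible at time $t$, the applied $\bo{v}(t)$ is feasible. The subtlety — and the reason invariance is no longer needed — is that feasibility of $\bo{v}(t-1)$ at the \emph{new} state $\bo{x}(t)$ is not automatic; I would note that the ``hold'' action keeps the reference constant, so constraint satisfaction must instead be established as an invariant of the closed loop by induction on $t$, using that an accepted move lands in $P$ and that, once inside, the acceptance/rejection mechanism never outputs an infeasible pair. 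Constraint satisfaction in $\mathcal{C}$ then follows from A4.

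For parts~2 and~3, the crucial point is that the acceptance condition (\ref{equ:cond2}) is \emph{exactly} the discrete inequality (\ref{equ:key2}) that drove the original convergence argument. So with $\bo{r}(t)=\bo{r_0}$ for $t\ge\hat t$ and $V(t)=\|\bo{v}(t)-\bo{r_0}\|_Q^2$, I would show $V$ is non-increasing: on an accepted step (\ref{equ:cond2}) gives $V(t)\le V(t-1)-\|\bo{v}(t)-\bo{v}(t-1)\|_Q^2$, and on a rejected step $\bo{v}(t)=\bo{v}(t-1)$ gives $V(t)=V(t-1)$. Since $V$ is bounded below, $\lim_t V(t)$ exists and $V(t-1)-V(t)\to0$, which forces $\|\bo{v}(t)-\bo{v}(t-1)\|_Q^2\to0$. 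By A5 this yields $\bo{x}(t)\to\bo{x}_{\bo{v}(t)}$, so eventually the state proximity hypothesis (\ref{equ:9pre}) holds at each step. I would then invoke the solver guarantee (\ref{inexactCond}): once $\|\bo{x}(t)-\bo{x}_{\bo{v}(t-1)}\|<\varepsilon'$, the solver returns a feasible $\bo{v}(t)$ with $\|\bo{v}(t)-\bo{r}^*(t)\|_Q^2\le\max\{0,\|\bo{v}(t-1)-\bo{r}^*(t)\|_Q^2-(\delta')^2\}$, and crucially this candidate satisfies (\ref{equ:cond2}) and is therefore accepted.

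The finite-time claim is where the $(\delta')^2$ decrement does its work. Writing $W(t)=\|\bo{v}(t)-\bo{r}^*\|_Q^2$ with $\bo{r}^*$ the projection of $\bo{r_0}$ onto $R_P$, I would argue that once $\bo{x}(t)$ is close enough to the equilibrium manifold (guaranteed after finitely many steps by the paragraph above), every subsequent accepted step strictly decreases $W$ by at least $(\delta')^2$ until $W=0$. Because a nonnegative quantity cannot be decreased by a fixed positive amount infinitely often, $W$ reaches $0$ in finitely many such steps, giving $\bo{v}(t)=\bo{r}^*$ — which equals $\bo{r_0}$ when $\bo{r_0}\in R_P$ (part~2) and the best approximation otherwise (part~3). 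The main obstacle I anticipate is the interplay between the two proximity conditions: I must verify that the vanishing of $\bo{v}(t)-\bo{v}(t-1)$ together with A5 really does drive $\bo{x}(t)$ into the $\varepsilon'$-ball around $\bo{x}_{\bo{v}(t-1)}$ \emph{persistently} (not just along a subsequence), so that (\ref{inexactCond}) applies at all large $t$; and I must confirm that the solver's output automatically meets the acceptance test (\ref{equ:cond2}), since otherwise rejected steps could stall progress and break the fixed-decrement argument. Handling the possible stalling during the transient before (\ref{equ:9pre}) is achieved — showing rejections cannot persist indefinitely — is the most delicate part of the argument.
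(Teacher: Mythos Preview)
Your overall strategy matches the paper's: use the acceptance test to recover inequality~(\ref{equ:key2}) and then run the Theorem~\ref{TheoremCG1} argument. For parts~2 and~3 you are, if anything, more careful than the paper (which simply says the conclusions ``follow by the same arguments as in the proof of Theorem~1''); the concerns you flag about whether the solver output automatically passes the acceptance test, and about persistence of~(\ref{equ:9pre}), are legitimate details the paper leaves to the reader.

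There is, however, a concrete gap in your part~1. You correctly notice that without invariance of $P$ the pair $(\bo{x}(t),\bo{v}(t-1))$ need not lie in $P$ after a ``hold'', and you say constraint satisfaction must be established ``as an invariant of the closed loop by induction'' --- but you never name the invariant. The right set is $O_\infty$, not $P$: the paper's proof states explicitly that the property $(\bo{x}(t),\bo{v}(t))\in O_\infty$ is maintained for all $t$ even though $(\bo{x}(t),\bo{v}(t))\in P$ may fail. The induction then closes immediately: an accepted step gives $(\bo{x}(t),\bo{v}')\in P\subseteq O_\infty$ by A4, while a rejected step keeps $\bo{v}$ constant and $O_\infty$ is forward invariant under constant $\bo{v}$ \emph{by its very definition} as the maximal output admissible set. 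Constraint satisfaction in $\mathcal{C}$ then follows from $O_\infty\subseteq\mathcal{C}$. Without identifying $O_\infty$ as the invariant, your inductive argument has no set to induct on, and the sentence ``once inside, the acceptance/rejection mechanism never outputs an infeasible pair'' is circular --- the hold action is only ``feasible'' in the sense of $O_\infty$, not $P$.
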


\begin{proof}
The constraints are satisfied as the property $(\bo{x}(t),\bo{v}(t)) \in O_\infty$ is maintained  for all $t \in \mathbb{Z}_{ \geq  0}$
despite the fact that $(\bo{x}(t),\bo{v}(t)) \in P$ may not hold. 
The acceptance/rejection logic based on the condition (\ref{equ:cond2}) 
ensures
$\|\bo{v}(t)-\bo{r}(t)\|_Q^2 \leq \|\bo{v}(t)-\bo{r}(t)\|_Q^2-\|\bo{v}(t)-\bo{v}(t-1)\|_Q^2$ for all $t \in \mathbb{Z}_{>  0}$.
This, coupled with the condition (\ref{inexactCond}),   ensures that properties 2 and 3 follow by the same arguments as in the proof of Theorem~1.
\end{proof}

According to Theorem~2 the only condition that a sub-optimal solver must satisfy in order to ensure the correct behaviour of the CG is that if $||\bo{x}(t)-\bo{x}_{\bo{v}(t-1)}||<\varepsilon^\prime$ then
the inequality \eqref{inexactCond} holds.  This condition is very reasonable in the CG setting. In fact, because of assumption A3 and of the Lipschitz continuity of $\bo{x}_{\bo{v}}$ w.r.t. to $\bo{v}$ (assumption A2) then for any $\varepsilon^\prime <\varepsilon$ there exists a $\delta^{\prime \prime}$ so that, whenever $||\bo{x}(t)-\bo{x}_{\bo{v}(t-1)}||<\varepsilon^\prime,$ then any $\bo{v}\in R_P$ such that $||\bo{v}-\bo{v}(t-1)|| \leq \delta^{\prime \prime}$ is feasible and thus $\bo{v}$ can be either moved  in the direction of  $\bo{r^*}(t)$ by a distance of $\delta^{\prime \prime}$ or be set equal to $\bo{r^*}(t)$.
This, in turn, guarantees the existence of a $\delta^\prime$ ensuring the inequality \eqref{inexactCond}.

This observation also allows to build the following algorithm that ensures the correct behaviour of the CG when integrated with an arbitrary optimization solver.

\begin{algorithm}[H]
\SetAlgoLined
\setstretch{0.65}
	\KwData{$\bo{x}(t)$, $\bo{r}(t)$, $\bo{v}(t-1)$}
	\KwResult{$\bo{v}(t)$}
	Compute $\bo{r}^*(t)=\arg \min_{\bo{v}\in R_P} ||\bo{v}-\bo{r}(t)||_Q^2$ \;
	Compute an approximate solution $\bo{v^\prime}$ of
	$$ \begin{array} {lcl}
	\bo{v^*} &=& \arg \min\limits_{\bo{v}} ||\bo{r^*}(t)-\bo{v}||_Q^2 \\
	& & \text{subject}\,\,\, \text{to} \\
	& & (\bo{x}(t),\bo{v}) \in P.
	\end{array}$$ 
	\eIf{$(\bo{x}(t),\bo{v^\prime}) \in P$ ~\mbox{AND}~ $\!|\!|\bo{v^\prime}\!-\!\!\bo{r}(t)|\!|_Q^2 \!\! \leq \!\! |\!|\bo{v}(t\!-\!1)\!-\!\bo{r}(t)|\!|_Q^2\!\!-\!|\!|
	\bo{v^\prime}\!-\!\bo{v}(t\!-\!1)|\!|_Q^2\!$}{
		set $\bo{v^{\prime \prime}}=\bo{v^\prime}$ \;
	}{
		set $\bo{v^{\prime \prime}} = \bo{v}(t-1)$ \;
	}
	\eIf{$||\bo{x}(t)-\bo{x}_{\bo{v}(t-1)}|| \leq \varepsilon^\prime$ ~\mbox{AND}~ $||\bo{v^{\prime \prime}}-\bo{v}(t-1)||_Q < \delta^\prime$}
   	{return $\bo{v}(t)= \bo{v}(t-1)+min\left\{1,\frac{\delta^{\prime \prime }}{||\bo{r^*}(t)-\bo{v}(t-1)||}\right\}{\color{red} }(\bo{r^*}(t)-\bo{v}(t-1))$}{return $\bo{v}(t)=\bo{v^{\prime \prime}}$\;}
	\caption{Command Governor for non-exact solver}
	\label{AlgoCG6}
\end{algorithm}
%

Note that if $Q=I$ then $\delta^\prime = \delta^{\prime \prime} $.
Note also that an SRG can be viewed as a special case of inexact CG in which $Q=I,$ $\bo{r}(t)\in R_P,$ and a search over the line segment between $\bo{v}(t-1)\in R_P$ is used as an inexact solution.  Note also that in the SRG approach the inequality (\ref{inexactCond}) is satisfied whenever the condition (\ref{equ:9pre}) holds.

{\bf Remark 2:}
The requirement of inequality (\ref{inexactCond}) holding whenever the condition (\ref{equ:9pre}) holds can be relaxed. For instance, it is sufficient that there exists $N \in \mathbb{Z}_{>0}$ such that the inequality (\ref{inexactCond})  holds at least once in every sequence of length $N$ of consecutive time steps $t$ for which condition (\ref{equ:9pre}) holds.

{\bf Remark~3: } Note that this modified CG algorithm loses the ``reconfiguration'' capabilities mentioned in Remark~1. In fact, whenever $(\bo{x}(t),\bo{v}(t-1))\notin P$ we are implicitly assuming that by keeping the command constant, the constraints are always satisfied.

{\bf Remark~4: } Depending on the shape $P$, in order to reduce the number of discarded $\bo{v}(t)$ as a result of violation of the condition (\ref{equ:cond2}), the following optimization problem can be used in place of the optimization problem \eqref{cg1}:
\begin{mini}|l|
	{\bo{v}}{\|\bo{r}(t)-\bo{v}\|_Q^2+\|\bo{v}(t-1)-\bo{v}\|_Q^2}{}{}, \label{cg1mod}
	\addConstraint{(\bo{x}(t),\bo{v}) \in P}{}{}.
\end{mini}
This optimization problem is also a QP.

\section{F-16 Aircraft Longitudinal Flight Control Example}\label{sec:4}

In this section we consider an example of longitudinal control of F-16 aircraft based on the continuous-time aircraft model presented in  \cite{sobel1985design}. This model represents linearized closed-loop aircraft dynamics at an altitude of $3000$ ft and
$0.6$ Mach number in straight and level subsonic flight.
The model has been converted to discrete-time assuming a sampling period of $5$ msec.  
The resulting discrete-time model has 
the form of Eq. (\ref{equ:dynamics}) with
$$f(\bo{x},\bo{v})=A\bo{x}+B\bo{v},$$
where
$$
{\small
A= { \left(\begin{array}{ccccc} 0.9998 & 3.126\times 10^{-5} & 0.006366 & 0.0008041 & 0.001198\\ -0.01104 & 0.9928 & 0.1892 & -0.07997 & -0.00731\\ 0.0002201 & 0.004952 & 0.9941 & -0.001009 & -0.001217\\ 0.3035 & 0.0844 & 0.6711 & 0.8547 & -0.007991\\ -0.5769 & -0.08625 & -0.953 & 0.04102 & 0.9148 \end{array}\right),~ }}
{\small
 B=\left(\begin{array}{cc} 5.314 \times 10^{-6} & 0.0002335\\ 0.01105 & -2.445 \times 10^{-5}\\ 1.334 \times 10{-5} & -0.0002335\\ -0.2676 & -0.03565\\ 0.1873 & 0.3896 \end{array}\right).}
$$
The components  of the state vector $x \in  \mathbb{R}^5$ are: the flight path angle (deg),
the  pitch rate (deg/sec), the angle of attack (deg), the  elevator deflection (deg), and the  flaperon deflection (deg), respectively. The components of the reference command vector $\bo{v} \in \mathbb{R}^2$ components are: the commanded pitch angle (deg), and commanded flight path angle (deg), respectively.  
Upper and lower bound constraints  are prescribed on the 
elevator deflection,
flaperon deflection,
  elevator deflection rate, 
  flaperon deflection rate,
  and angle of attack. 
These constraints can be written as
\begin{equation}\label{equ:cnr10}
(\bo{x},\bo{v}) \in \mathcal{C}=\{(\bo{x},\bo{v}):~ \bo{y_c} = C_c \bo{x} + D_c \bo{v} \in Y_c\},
\end{equation}
with $$C_c=\left(\begin{array}{ccccc} 0 & 0 & 0 & 1.0 & 0\\ 0 & 0 & 0 & 0 & 1.0\\ 65.0 & 17.82 & 142.3 & -30.5 & -1.68\\ -122.0 & -17.95 & -200.6 & 8.412 & -17.89\\ 0 & 0 & 1.0 & 0 & 0 \end{array}\right),
D_c=\left(\begin{array}{cc} 0 & 0\\ 0 & 0\\ -57.6 & -7.34\\ 40.4 & 81.6\\ 0 & 0 \end{array}\right)$$
and $$Y_c=\{\bo{y_c}:~ \Gamma \bo{y_c} - \bo{\gamma} \leq \bo{0}\}$$
$$=
[-25,25] \times [-20, 20] \times  
[-42, 42] \times [-56, 56] \times [-4, 4],$$
where the set $Y_c$ is the Cartesian product of the intervals restricting the range of each of the components of $y_c$ in Eq.~(\ref{equ:cnr10}), which is a $5 \times 1$ vector and its compoments have units of deg, deg, deg/sec, deg/sec and deg, respectively.  The matrices $\Gamma$ and $\bo{\gamma}$ are
$10 \times 5$ and $10 \times 1$, respectively.
The limits on the elevator and flaperon deflection and deflection rates are based on
\cite{sobel1985design}.  The angle of attack limit of $\pm 4$ deg has been made tighter than usual to create a more challenging  scenario for the CG.  In practice, tight limits on angle of attack could be imposed by the flight envelope protection system when  flying in presence of significant wind gusts or high turbulence \cite{richardson2013envelopes} (especially near a trim condition already at a high angle of attack), during aerial refueling, when flying in tandem with a drone or if wing icing has occurred.

In this example we compare three CG implementations:  The first implementation (conventional CG) is based on optimization problem (\ref{cg1mod}) with $P=\tilde{O}_\infty$ defined by $748$ inequalities and the primal-dual active set algorithm 
{\tt qpkwik} implemented in Matlab was used to solve the QP problem (\ref{cg1mod}) with the maximum number of iterations limited to $200$.  This solver was chosen as it appears to be one of the fastest options for solving QP problems for optimization problems that, like the CG, have a small number of optimization variables and large number of constraints.
The second implementation (modified CG) was the same as the first except for the maximum number of iterations limited to $3$. The QP solver was warm-started in both implementations 1 and 2. The third implementation (also modified CG) was based on $P$ with $106$ inequalities obtained from $\tilde{O}_\infty$ by systematic elimination of the almost redundant inequalities and a pull-in procedure \cite{gilbert1999fast}.  This reduction in the number of inequalities translates into more than a $7$-fold reduction in ROM size needed to store $P$. In this third implementation, we use as an approximate solver based on a modified scalar reference governor
update (\ref{srg1}) that assumes the following form 
\begin{equation}\label{equ:modsrg}
\bo{v} = \bo{v}(t-1) +\kappa \bo{E}(t) (\bo{r}(t) - \bo{v}(t-1)), 	
\end{equation}
where $\bo{E}(t)$ is alternating between 
$$\left\{ \left[\begin{array}{c} 1 \\ 0  \end{array} \right], \left[\begin{array}{c} 0 \\ 1  \end{array} \right], 
\left[\begin{array}{c} 1 \\ 1  \end{array} \right]
\right\}.$$
This strategy is motivated by the idea of applying time distributed coordinate descent. Since only a scalar parameter $\kappa$ is optimized, the minimizer can be easily found by evaluating an explicit expression \cite{gilbert1999fast}.  To guarantee convergence to constant constraint admissible inputs, every third update is made along the line segment connecting $\bo{v}(t-1)$ and $\bo{r}(t)$; this ensures that the relaxed condition in Remark 2 holds with $N=3$. 
In each of these three implementations, the logic based condition (\ref{equ:cond2}) is applied.  
The responses are shown in Figures~\ref{fig:sim11}-\ref{fig:sim14}.  
The time history of the maximum of constraints at each time instant, i.e., of
$\max \{\Gamma( C_c \bo{x}(t)+ D_c \bo{v}(t) )- \gamma \}$ is plotted in Figure~\ref{fig:sim13}.  
		As this maximum stays less or equal than zero  (whose value is designated by a dashed black line in Figure~\ref{fig:sim13}),   the constraints are satisfied by each of
Implementations.  Figure~\ref{fig:sim14} shows that the condition (\ref{equ:cond2}) is activated sparingly for Implementation 2 and frequently for Implementation 3.  
The computational time statistics were
tallied from $19$ simulations in sequence in Matlab running under
Windows 10 on Microsoft Surface 7 tablet for
each of the Implementations.  The computation
times (averaged over time instants and $19$ runs) were $9.539$ msec for Implementation 1, $4.978$ msec for 
Implementation 2 and $1.9087$ msec for Implementation 3.
As is clear from Figures~\ref{fig:sim11}-\ref{fig:sim12}, the response is slightly slower in case of Implementation 3 as compared to Implementations 1 and 2.
 
\begin{figure}[!ht]
	\begin{center}
		\includegraphics[width=5cm]{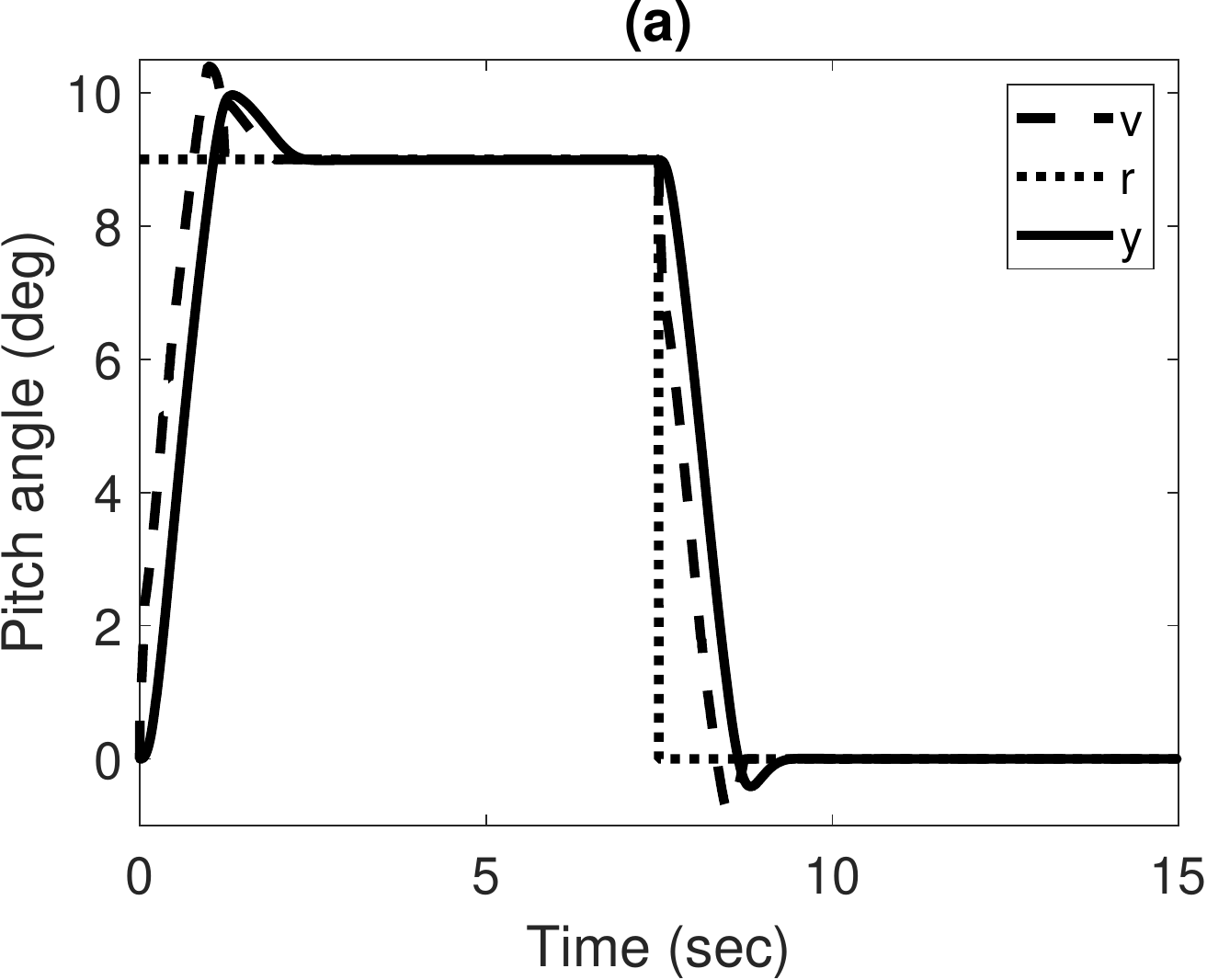}~~
		\includegraphics[width=5cm]{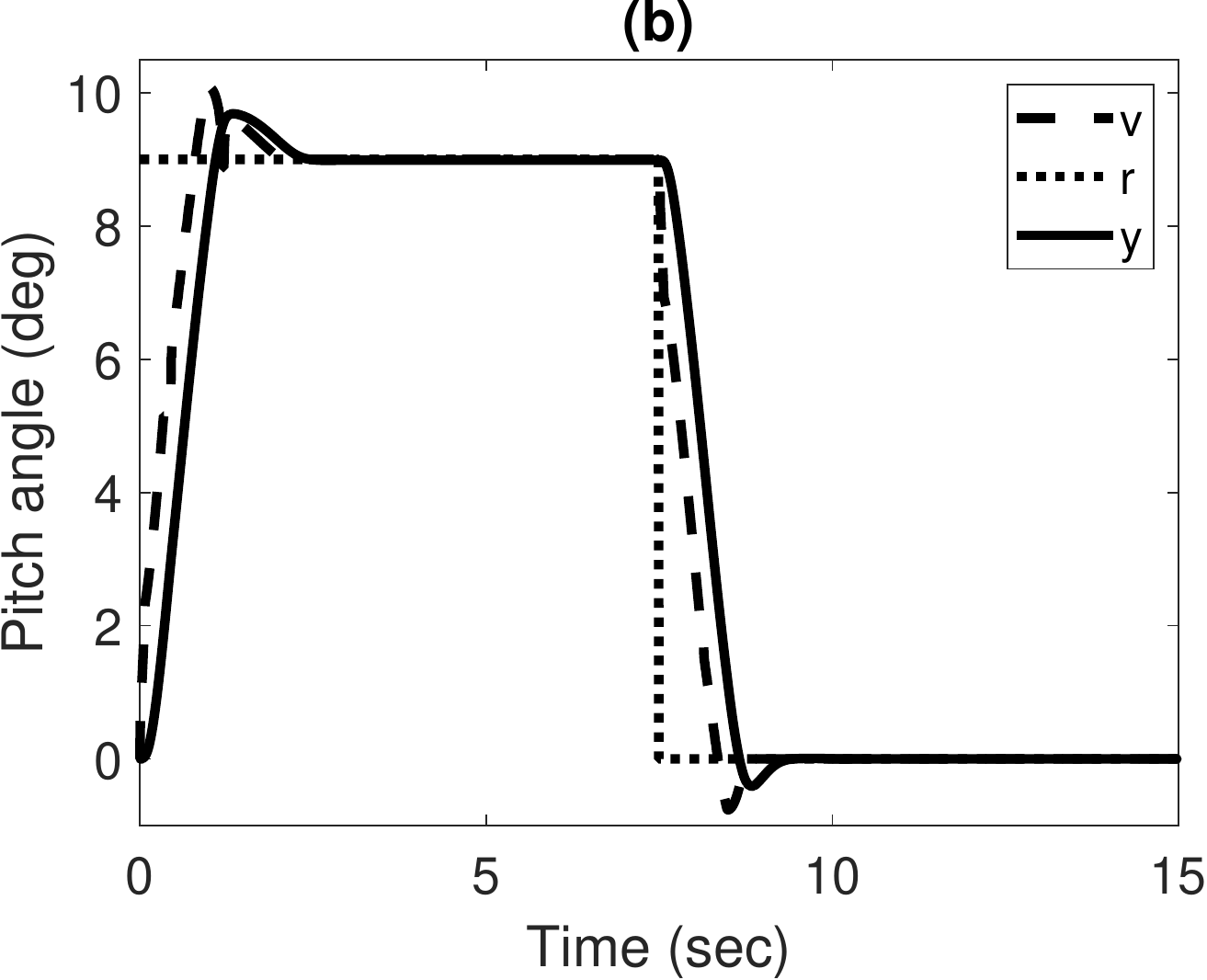}~~
		\includegraphics[width=5cm]{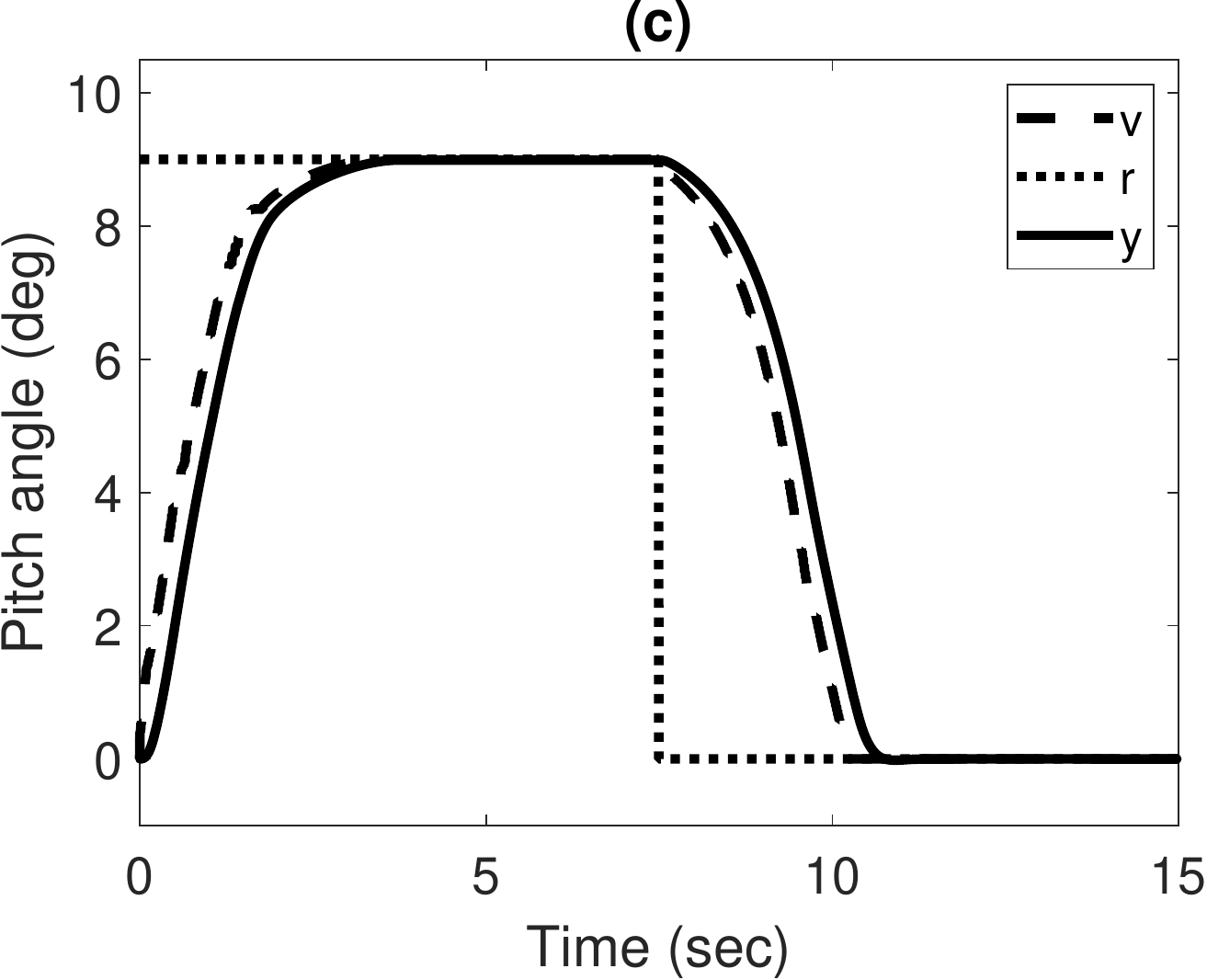}
		\caption{Time histories of pitch angle command, modified pitch angle command and actual pitch angle: Implementation 1 (a), Implementation 2 (b), Implementation 3 (c). }
		\label{fig:sim11}
	\end{center}
\end{figure}

\begin{figure}[!ht]
	\begin{center}
		\includegraphics[width=5cm]{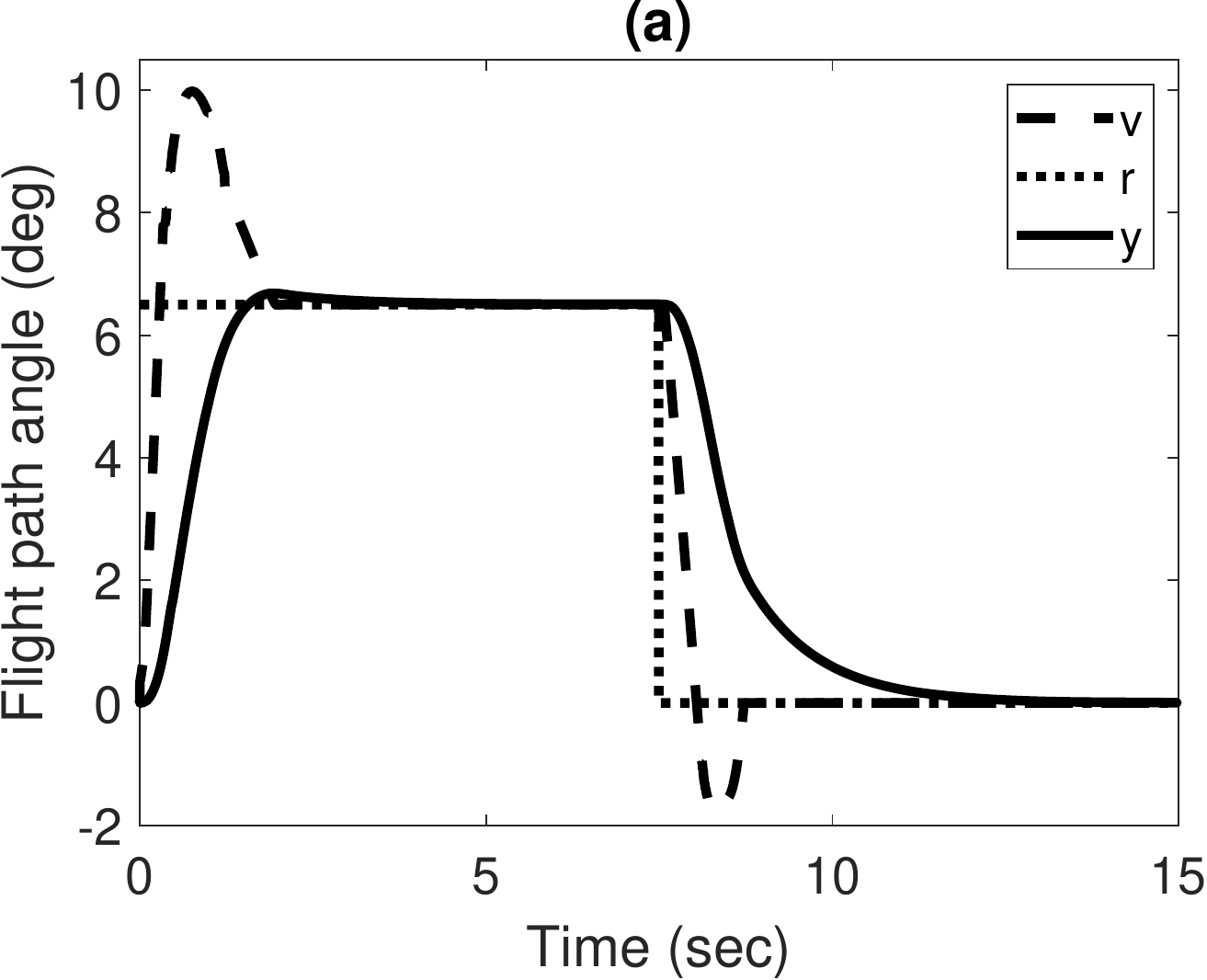}~~
		\includegraphics[width=5cm]{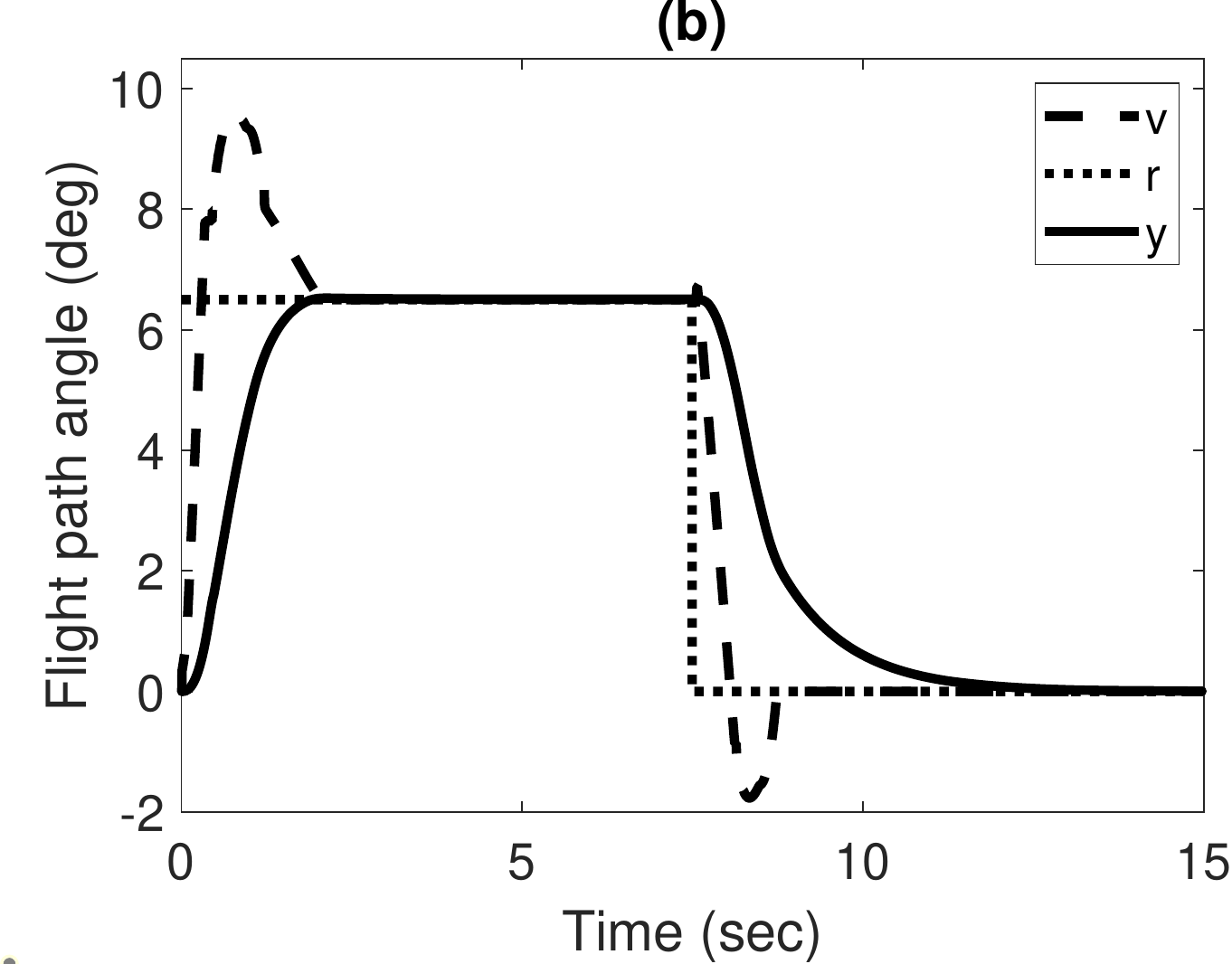}~~
		\includegraphics[width=5cm]{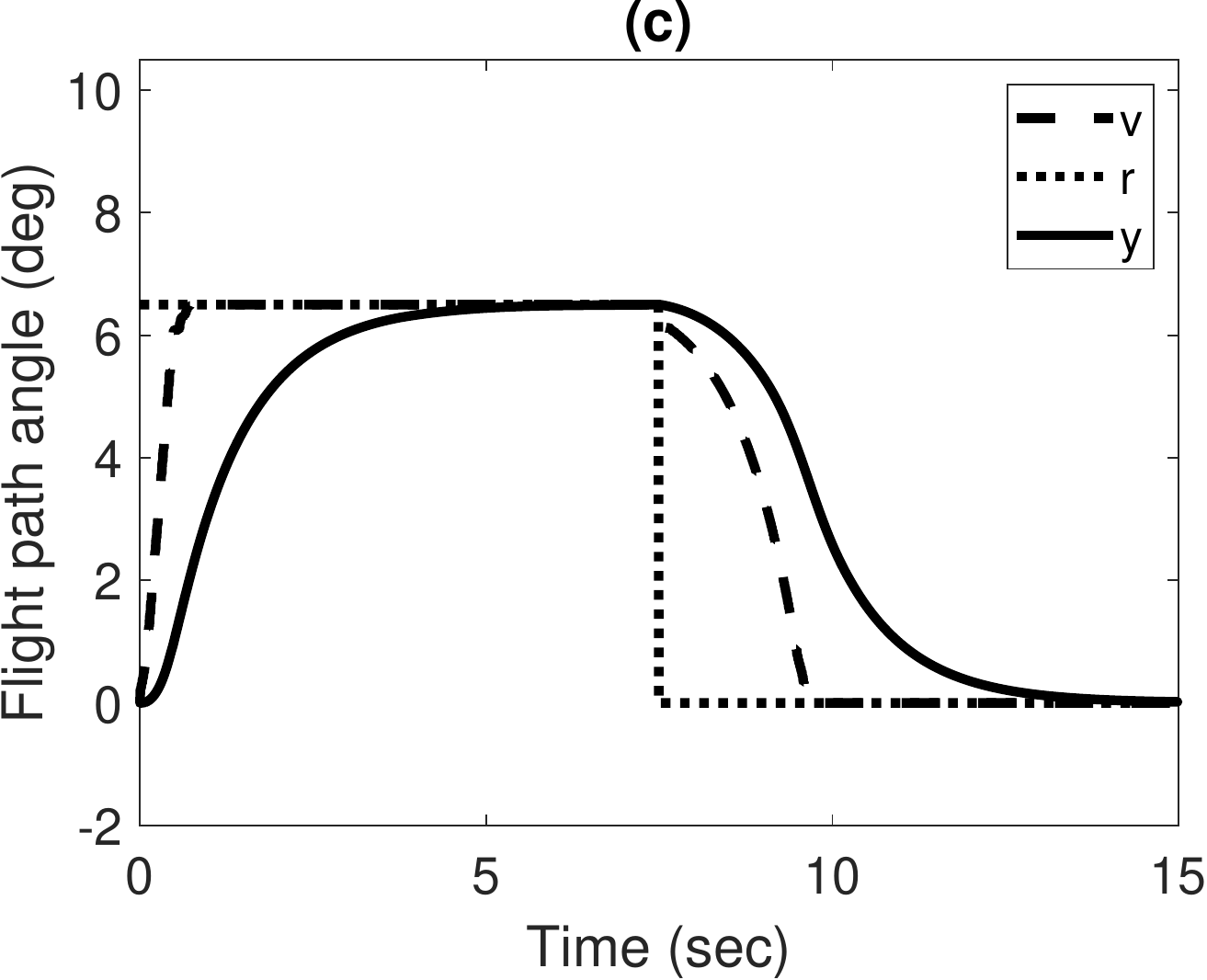}
		\caption{Time histories of flight path angle command, modified flight path angle command and actual flight path angle:
		Implementation 1 (a), Implementation 2 (b), Implementation 3 (c).
		}
		\label{fig:sim12}
	\end{center}
\end{figure}

\begin{figure}[!ht]
	\begin{center}
		\includegraphics[width=5cm]{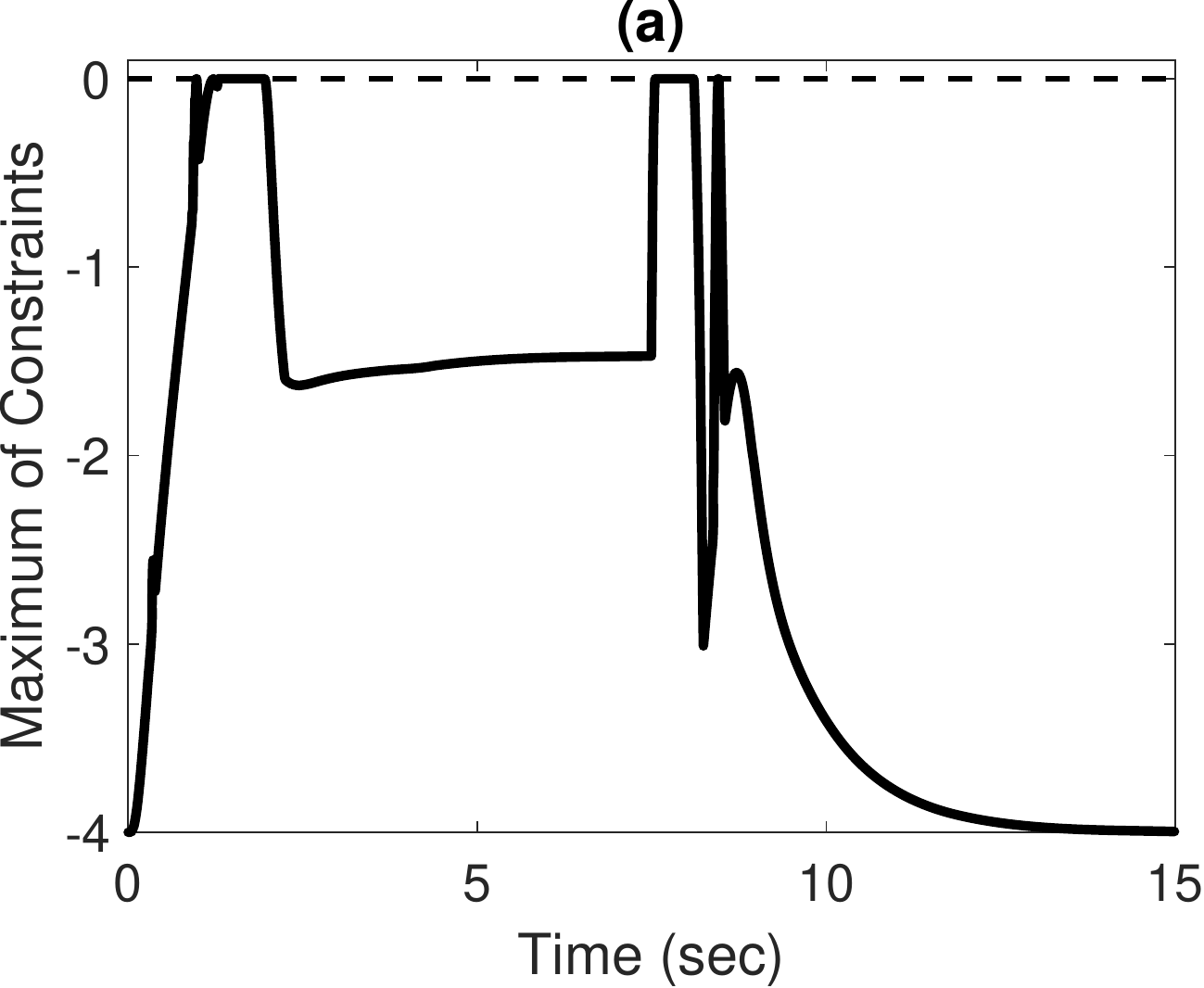}~~
		\includegraphics[width=5cm]{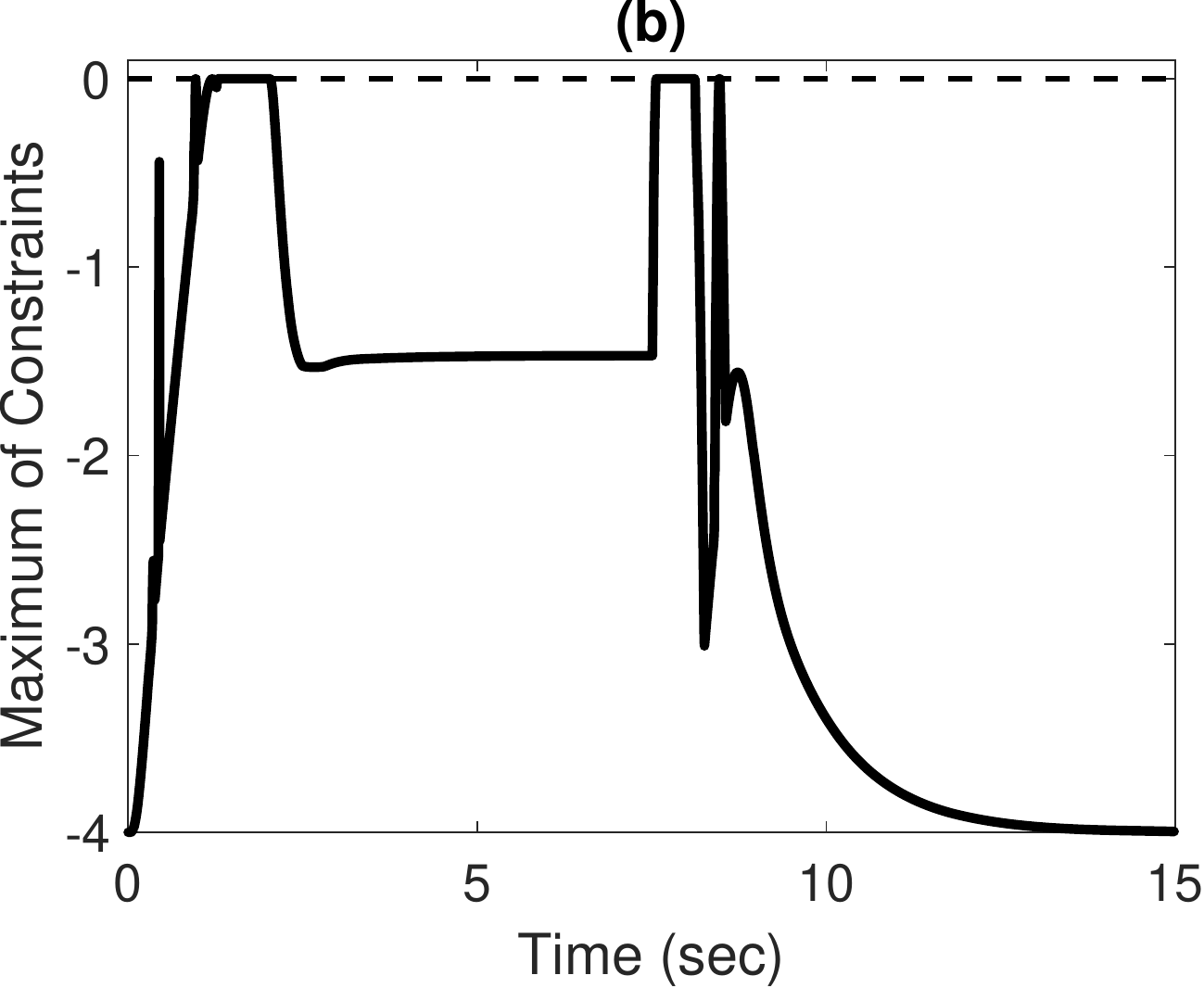}~~
		\includegraphics[width=5cm]{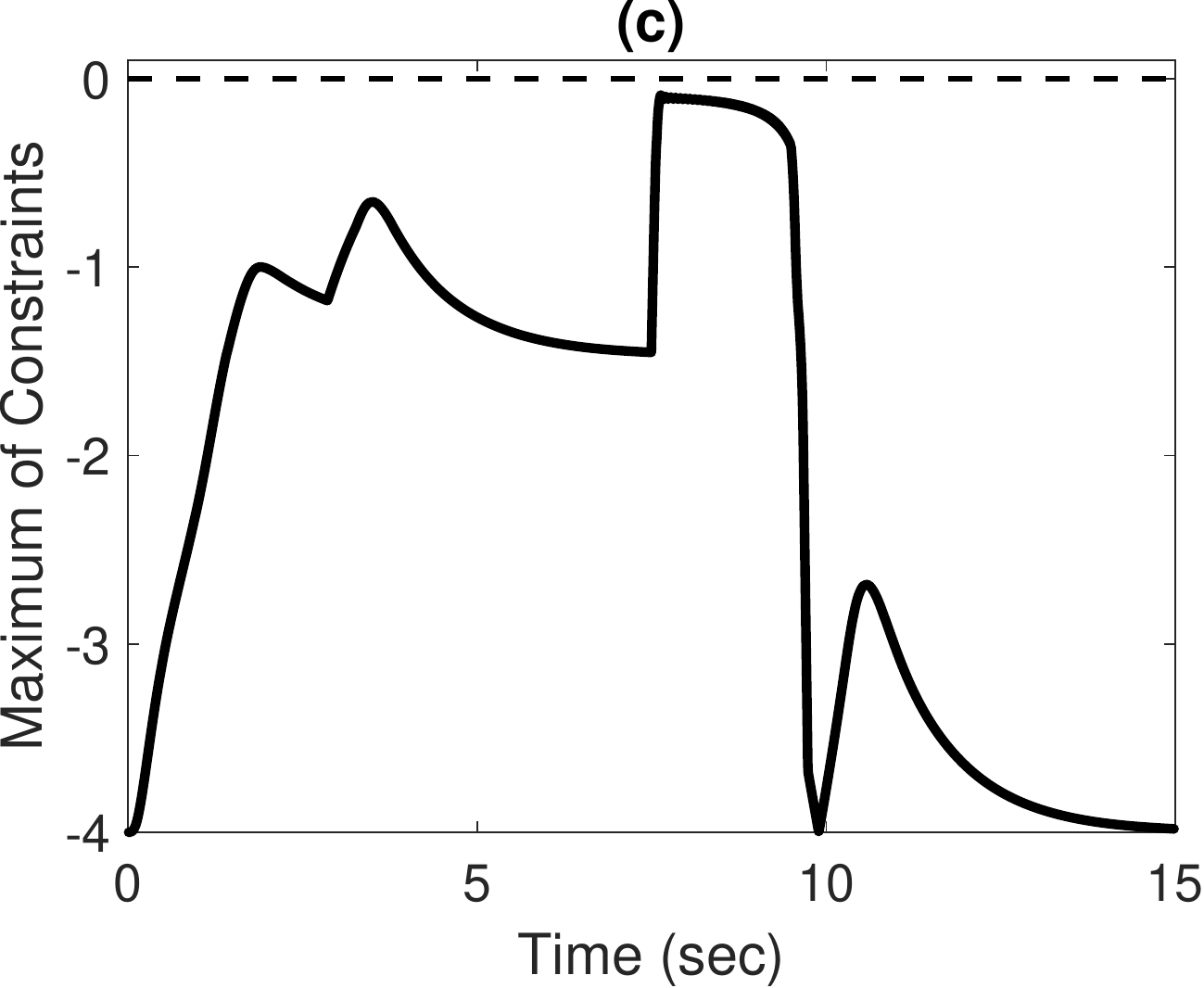}
		\caption{Time history of the maximum of constraints, $\max \{
		\Gamma_Y( C x(t)+ D v(t) )-
		\gamma_Y \}$: Implementation 1 (a), Implementation 2 (b), Implementation 3 (c).  }
		\label{fig:sim13}
	\end{center}
\end{figure}

\begin{figure}[!ht]
	\begin{center}
		\includegraphics[width=5cm]{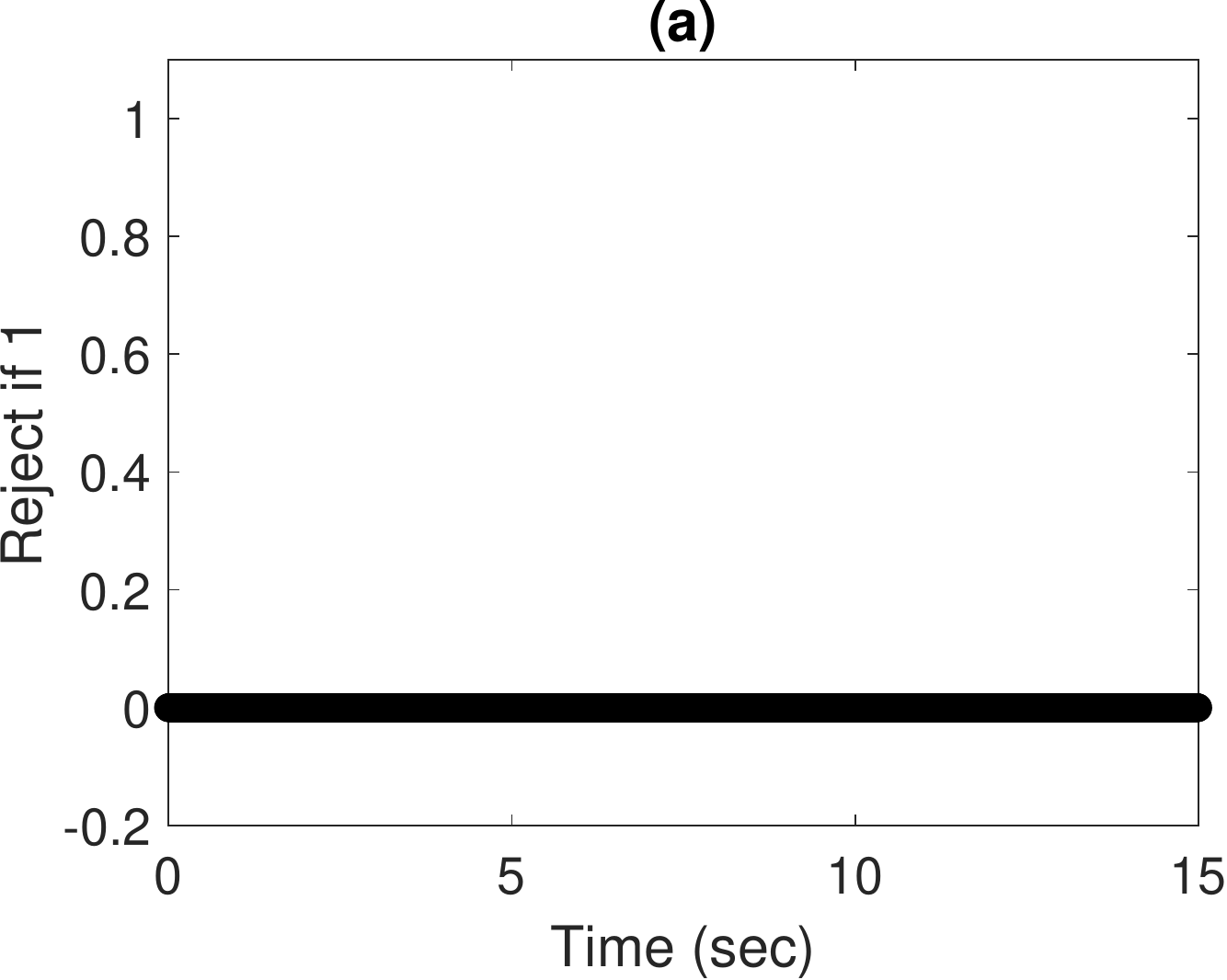}~~
		\includegraphics[width=5cm]{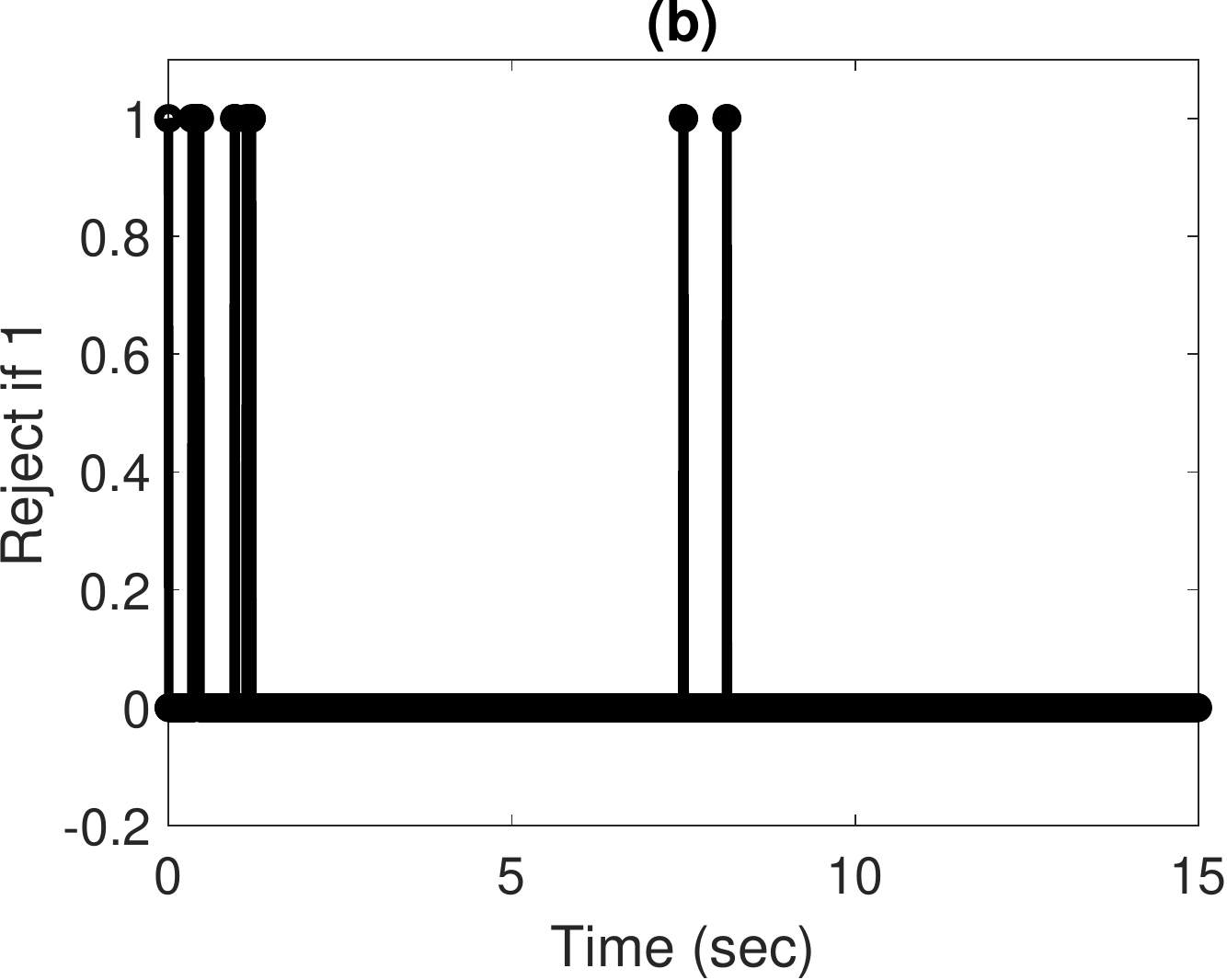}~~
		\includegraphics[width=5cm]{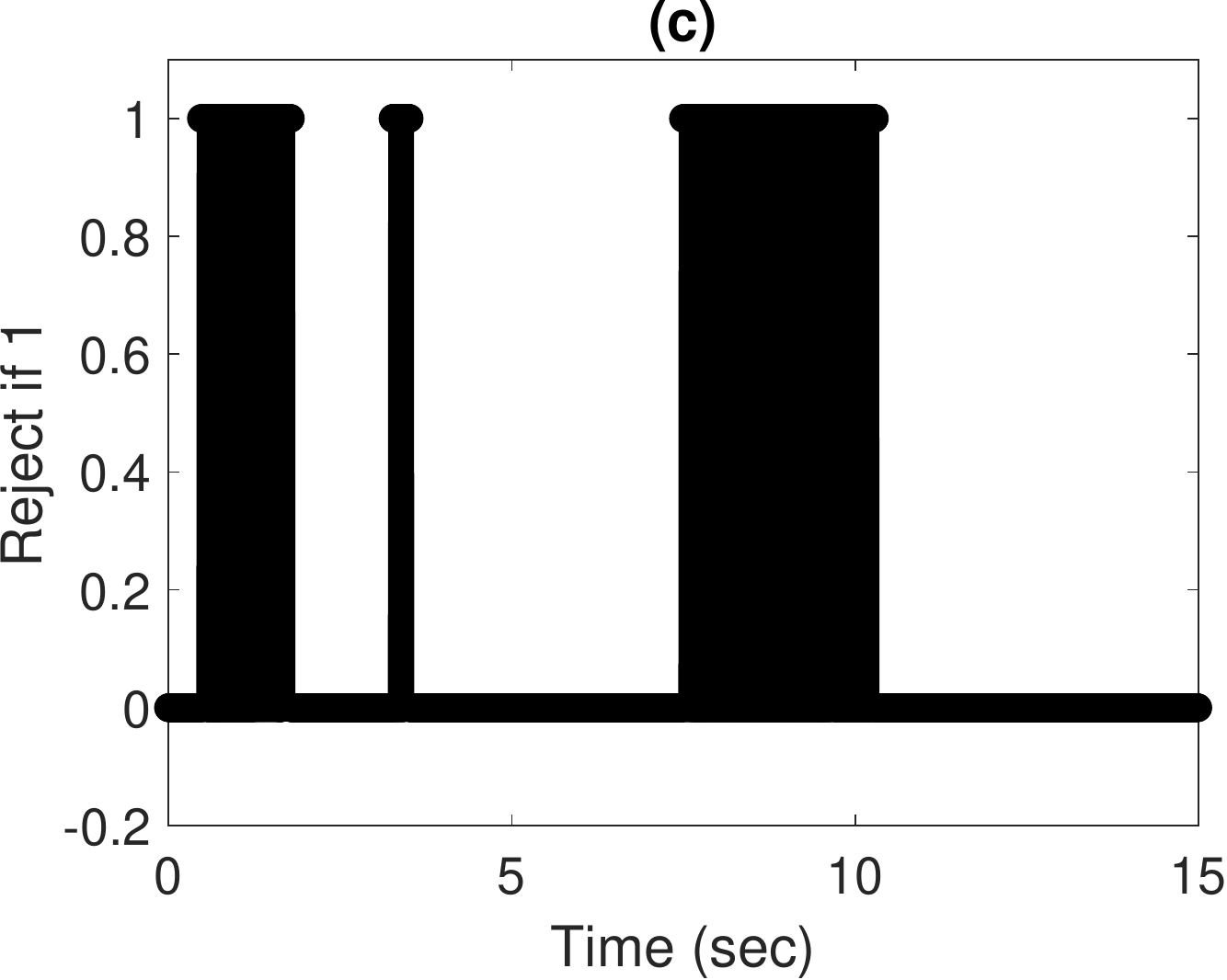}
		\caption{Time instants at which condition (\ref{equ:cond2}) is violated, $\bo{v}^\prime$ is rejected, and previous value of reference is held for Implementation 1 (a), Implementation 2 (b) and  Implementation 3 (c).  }
		\label{fig:sim14}
	\end{center}
\end{figure}

\section{Concluding Remarks}\label{sec:5}

The Command Governor (CG) is an add-on scheme to a nominal closed-loop system used to satisfy state and control constraints through reference command modification to maintain state-command pair in a safe set.  By modifying the CG logic, it is possible to implement CG without requiring this safe set to be invariant or relying on exact optimization, while retaining constant command finite time convergence properties typical of CG.  An F-16 longitudinal flight control simulation example has been reported that  demonstrated the potential of the proposed approach for significant reduction in the computation time and in ROM size required for implementation.

\section*{Acknowledgement} The authors would like to thank Dr. Dominic Liao-McPherson for
the code of primal-dual active set solver {\tt qpkwik} used in the numerical experiments.  The second author acknowledges  the support of
AFOSR under the grant number FA9550-20-1-0385 to the University
of Michigan.

\bibliography{sample}

\end{document}